\documentclass[leqno,12pt]{article}

\usepackage{amsmath,amssymb,amsthm,bm,mathrsfs}
\usepackage{enumitem}

\usepackage[a4paper,top=2cm,bottom=2.5cm,left=1cm,right=1cm]{geometry}

\newcommand{\R}{\mathbb{R}}
\newcommand{\N}{\mathbb{N}}

\newcommand{\F}{\mathbb{F}}
\newcommand{\E}{\mathcal{E}}
\newcommand{\OO}{\mathcal{O}}
\renewcommand{\P}{\mathbb{P}}
\newcommand{\divv}{\mathrm{div}}

\newcommand\atopp[2]{\textstyle\genfrac{}{}{0pt}{}{#1}{#2}} 

\DeclareMathOperator*{\esup}{\mathrm{ess\,sup}}
\DeclareMathOperator*{\supp}{\mathrm{supp}}

\newtheorem{theorem}{Theorem}[section]
\newtheorem{lemma}{Lemma}[section]

\theoremstyle{definition}
\newtheorem{remark}{Remark}[section]

\makeatletter

\@addtoreset{equation}{section}
\makeatother

\newcommand{\dd}{{\rm\,d}}

\usepackage{framed,color}
\definecolor{shadecolor}{rgb}{0.9,0.9,0.9}

\title{Solutions of the Navier--Stokes equations with forced rapid space-time decay}

\author{Lorenzo Brandolese\thanks{Université Lyon 1, Institut Camille Jordan, CNRS UMR~5208, 69622 Villeurbanne, France.}
\and
Matthieu Pageard\footnotemark[1]}

\date{\today}

\begin{document}

\maketitle

\abstract{We study the pointwise decay properties of solutions to the incompressible Navier--Stokes equations, both in the space and time variables. It is well known that generic global solutions on $\R^n$ do not decay faster at infinity than $|x|^{-(n+1)}$ and $t^{-(n+1)/2}$ in the pointwise sense. In this paper, we address the control problem of constructing an external forcing and a  solution to the Navier--Stokes equations whose space-time decay properties go beyond these limiting rates. 
A distinctive feature of the forcing term is that its spatial profile can be fixed once and for all, independently of the initial data of the problem, and localized in an arbitrarily small region of $\R^n$. Only the temporal profile of the external force displays a dependency on the initial datum.}

\medskip\noindent
{\textbf{2020 Mathematics Subject Classification:} 35Q30, 76D05.}

\medskip\noindent
{\textbf{Keywords:} Navier--Stokes equations; pointwise decay; control problem.}

\section{Introduction}

Let $n\geq 2$. We consider the incompressible Navier--Stokes equations on $\R^n$:
\begin{equation}
\label{NS}
\tag{NS}
\begin{cases}
\partial_t u -\Delta u + (u\cdot\nabla)u + \nabla\pi = \F, \\
\divv\,u=0, \\
u(\cdot,0)=u_0.
\end{cases}
\end{equation}
Since the work of Dobrokhotov and Shafarevich \cite{DS94}, it is well known that solutions of the Navier--Stokes equations \eqref{NS} on the whole space $\R^n$ usually decay 
at the spatial infinity at relatively slow decay rates: typically not faster than 
$\mathcal{O}(|x|^{-(n+1)})$ as $|x|\to+\infty$, no matter how fast the initial velocity field~$u_0$ decays, when there is no external force, and even at slower rates, typically like $\mathcal{O}(|x|^{-n})$,
when the external force $\F\not\equiv0$ is not of divergence form. 
This instantaneous spatial spreading effect was sharply measured, \emph{e.g.}, in \cite{BraM02,BraV,KukR,Top25} in the case of zero external forces. The analysis made in the books
\cite{Lem16,Cha25} encompasses the case of non-zero forces.
Motivated by~\cite[Theorem~3.6.2]{Cha25} and \cite{BraOka}, in this paper we will focus on the case
of an external force of divergence form, 
\[
\F=\divv f,
\]
where $f=(f_{i,j})_{1\le i,j\le n}$ is an $(n\times n)$-matrix. 

Concerning the large-time behavior, global solutions usually do not decay faster than $\mathcal{O}(t^{-(n+1)/2})$, in the pointwise sense (\emph{i.e.}, in the $L^\infty(\R^n)$-norm)
and therefore not faster than $\mathcal{O}(t^{-(n+2)/4})$ in the $L^2(\R^n)$-norm, as $t\to+\infty$.
This fact was first discussed in~\cite{Sch91}, then further studied, \emph{e.g.}, in~\cite{BaeJin, MS01, GW1, GW2}.

Only few exceptions to this generic behaviors were found, usually putting
symmetries on the initial data \cite{Bra04i},
 or imposing the initial data to belong 
to specific manifolds of finite codimension, in spaces of strongly localized vorticities, see \cite{GW1,GW2}. 

For example, in the 2D case, one can construct a solution with radial vorticity such that, for all $t>0$, $u(\cdot,t)\in\mathcal{S}(\R^2)$ and the $L^2(\R^2)$-norm decays to zero exponentially fast. But such non-zero solutions invariant under rotations do not exist in 3D, because of topological obstructions. These flows are somehow ``trivial'', in the sense that the nonlinearity identically vanishes and the solution of the Navier--Stokes equations boils down to the solution of the heat equation.

Examples of flows with nontrivial nonlinearity and fast space-time decay are proposed in~\cite{Bra04i}, by putting a dihedral symmetry (in 2D) or a polyhedral symmetry on $\omega_0=\nabla\times u_0$ (in 3D).
In these situations, the symmetries force the 
vorticity $\omega:=\nabla\times u(\cdot,t)$ to have a large number of vanishing moments,
not only when $t=0$ but also for all $t>0$; then one can deduce from the Biot-Savart law that the velocity field decays both as $|x|\to+\infty$ and $t\to+\infty$ at
faster rates, depending on how many vanishing moments of the vorticity persist during the evolution.
For example, in 3D, for flows with icosahedral symmetries, one can achieve
decay rates as $\mathcal{O}(|x|^{-8})$  as $|x|\to+\infty$, 
and $\mathcal{O}(t^{-4})$ as $t\to+\infty$ in the pointwise sense,
or as $\mathcal{O}(t^{-5/2})$ in the $L^2(\R^3)$-norm. These decays are twice faster than for generic flows.
No example of 3D flow decaying faster than $|x|^{-8}$ is known so far.  
The invariant manifold approach of~\cite{GW1,GW2} would be effective to prove the existence
of flows with arbitrary large algebraic \emph{time} decay rates, but the obtained solutions do not have, in general, better spatial decays than generic solutions. 
The recent papers~\cite{McOTop23,Top25} provide a deep understanding of the spatial behavior of generic solutions, by exploring the insight given by asymptotic spaces introduced by the same authors \cite{McOTop11}, but the analysis therein does not cover solutions with over-critical space decay.

\medskip
    
In this work, we will consider well localized initial data that are generic, in the sense that we require no specific symmetry structure. 
We will address the control problem consisting in constructing a forcing term of divergence form $\F=\divv f$, where $f=f(x,t)$ is a $(n\times n)$-matrix,
with the goal of obtaining arbitrary fast decay to solutions of~\eqref{NS}, both in the space and time variables.
Remarkably, the spatial profile of the forcing matrix $f$ can be arbitrarily described a priori and does not depend on the initial datum: 
in other words, $f$ will be of the form
\[
f(x,t)=\sum_{\atopp{\alpha\in\N^n}{0\le|\alpha|\le m-1}} \chi_\alpha(x)A_\alpha(t),
\]
where the functions $\chi_\alpha$ can be chosen once and for all independently of the initial datum~$u_0$; only a finite number
of time-depending parameters (the coefficients of $A_\alpha(t)$) have to be tuned
after the initial datum to achieve the desired fast decay condition.
The number of the required tuning parameters depends on the rate that one wants to achieve.
As it will be clear from the proof, the force and the corresponding solution
will be constructed algorithmically from the datum $u_0$.

\subsection{Main result}

In order to state our results, we need to introduce the following conditions on the initial datum $u_0$.

Let $T>0$, $m\in\N$ and $\kappa>0$. When dealing with local-in-time solutions defined on an interval $[0,T]$, we will consider initial data which are 
measurable functions satisfying a decay condition of the form
\begin{subequations}
\begin{equation}
\label{small:sta}
\bigl(\sqrt{T}+T^{(m+3)/2}\bigr)\esup_{x\in \R^n}\,(1+|x|)^{n+1+m}|u_0(x)| \le \kappa.
\end{equation}
When dealing with global-in-time solutions, the conditions on~$u_0$ will be of the form
\begin{equation}
\label{hypo}
\begin{split}
&\esup_{x\in \R^n}\,(1+|x|)^{n+1+m}|u_0(x)|\le\kappa, \\
&\int (1+|x|)^{m+1}|u_0(x)|\dd x\le\kappa.
\end{split}
\end{equation}
In the latter case, it will be useful to prescribe moment conditions on~$u_0$, namely
\begin{equation}
    \label{hypo2}
    \int x^\alpha u_0(x)\dd x=0\qquad\forall\alpha\in\N^n,\,0\le|\alpha|\le m.
\end{equation}
\end{subequations}

Let us now state our main result. 

\begin{theorem}
\label{th:1}
Let $n\ge2$ and $m\ge0$ be two integers. Let $T\in(0,+\infty]$
and $u_0$ be a divergence-free vector field.
Let $K=\prod_{i=1}^n[a_i,b_i]$ be a box in $\R^n$, with $a_i<b_i$ for all $i=1,\ldots,n$.
There exist a function $\chi\in C^\infty_0(\R^n;\R)$ with $\supp\chi\subset K$, 
a constant $\kappa=\kappa(n,m,\chi)>0$ and a force $f_m$ given by $f_0\equiv0$ and
\begin{equation}
       \label{control}
       f_m(x,t)=\sum_{\atopp{\alpha\in\N^n}{0\le|\alpha|\le m-1}}  A_\alpha(t)\chi_\alpha(x)\qquad\forall(x,t)\in\R^n\times[0,T)\qquad(m\ge1),
\end{equation}
where $\chi_\alpha(x):=\frac{(-1)^{|\alpha|}}{\alpha!}\partial^\alpha\chi(x)$ and $A_\alpha(t)$ is an $(n\times n)$-matrix depending only on $\alpha$, $t$ and $u_0$, such that the following properties hold true:
\begin{enumerate}[label=\roman*)]
    \item If $T<+\infty$ and $u_0$ satisfies the smallness condition \eqref{small:sta}, then there exists a solution $u$ of the Navier--Stokes equations \eqref{NS} with $\F=\divv f_m$, verifying the decay property
\begin{equation}
       \label{conc}
       \esup_{t\in[0,T]}|u(x,t)| = \OO(|x|^{-(n+1+m)})\qquad\text{as $|x|\to+\infty$}, 
\end{equation}
If additionally $u_0(x)=o(|x|^{-(n+1+m)})$ as $|x|\to+\infty$, then there exists a solution $u$ of the Navier--Stokes equations \eqref{NS} with $\F=\divv f_{m+1}$ such that 
\begin{equation}
       \label{conc2}
       \esup_{t\in[0,T]}|u(x,t)| = o(|x|^{-(n+1+m)})\qquad\text{as $|x|\to+\infty$}, 
\end{equation}

    \item If $T=+\infty$ and $u_0$ satisfies the smallness conditions \eqref{hypo} and the moment conditions \eqref{hypo2}, then there exists a global solution $u$ of the Navier--Stokes equations \eqref{NS} with $\F=\divv f_m$, verifying the decay properties 
\begin{equation}
       \label{concl}
       \begin{split}
       &\esup_{t\ge0}|u(x,t)| = \OO(|x|^{-(n+1+m)})\qquad\text{as $|x|\to+\infty$}, \\
       &\esup_{x\in\R^n}|u(x,t)| = \OO(t^{-(n+1+m)/2})\qquad\text{as $t\to+\infty$}.
       \end{split}
\end{equation}
Moreover, the external force $f_m$ satisfies 
\begin{equation}
       \label{sizef}
       \|f_m(t)\|_{L^\infty(\R^n)} = \OO(t^{-(n+3+m)/2})\qquad\text{as $t\to+\infty$}.
\end{equation}
\end{enumerate}
\end{theorem}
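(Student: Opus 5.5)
We work with the mild formulation
\[
u(t)=e^{t\Delta}u_0-\int_0^t e^{(t-s)\Delta}\mathbb{P}\,\divv\bigl(u\otimes u-f_m\bigr)(s)\dd s ,
\]
and define $u$ and $f_m$ together by a fixed point argument. The whole argument rests on the kernel of $e^{t\Delta}\mathbb{P}\divv$. It decays only like $|x|^{-(n+1)}$, and this is exactly what produces the generic spreading.

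\textbf{Step 1: Taylor expansion of the Duhamel term.} For a matrix $g$ with enough spatial decay, Taylor-expand $F(x-y,t-s)=\nabla e^{(t-s)\Delta}\mathbb{P}(x-y)$ in $y$ up to order $m-1$. This gives
\[
\int_0^t\!\!\int F(x-y,t-s)g(y,s)\dd y\dd s=\sum_{|\alpha|\le m-1}\int_0^t\frac{(-1)^{|\alpha|}}{\alpha!}\partial^\alpha F(x,t-s)\,M_\alpha(s)\dd s+R_m(x,t),
\]
where $M_\alpha(s)=\int y^\alpha g(y,s)\dd y$ are the moments. Because $|\partial^\alpha F(x,t)|\lesssim(|x|+\sqrt t)^{-(n+1+|\alpha|)}$, the remainder $R_m$ decays like $|x|^{-(n+1+m)}$. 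For global solutions it also decays like $t^{-(n+1+m)/2}$, provided $g$ decays like $|x|^{-(n+1+m)}$ and enough weighted moments of order $m$ are finite.

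\textbf{Step 2: the control.} The expansion shows that all we need is the vanishing of the moments of order $\le m-1$ of $g=u\otimes u-f_m$. Choose $\chi\in C_0^\infty(K)$ with $\int\chi=1$. Then $\chi_\alpha=\frac{(-1)^{|\alpha|}}{\alpha!}\partial^\alpha\chi$ satisfies
\[
\int y^\beta\chi_\alpha\dd y=\delta_{\alpha\beta}\qquad\text{for }|\beta|\le|\alpha|,
\]
by integration by parts. For $|\beta|>|\alpha|$ the moment may be nonzero, so the full moment matrix is triangular with unit diagonal. Hence the linear system
\[
\int y^\beta f_m(y,t)\dd y=\int y^\beta (u\otimes u)(y,t)\dd y,\qquad |\beta|\le m-1,
\]
can be solved uniquely for $(A_\alpha(t))_{|\alpha|\le m-1}$, as a bounded linear map of the moments of $u\otimes u$.

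We then work in the space $X$ with the following norm (for $T=\infty$, the time weight is added):
\[
\|u\|_X=\sup_{t}\esup_x\,(1+|x|)^{n+1+m}|u(x,t)|\qquad\text{plus}\qquad\sup_x(1+t)^{(n+1+m)/2}|u(x,t)|.
\]
On this space the map $u\mapsto e^{t\Delta}u_0+R_m[u\otimes u-f_m(u)]$ is a contraction for small $\kappa$.

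\textbf{Step 3: estimates.} Three bounds are needed.
\begin{enumerate}[label=(\alph*)]
\item \emph{Linear part.} We need $(1+|x|)^{n+1+m}|e^{t\Delta}u_0|$ to be bounded. For $T<\infty$ this holds with constant $\lesssim(1+T^{(m+n+1)/2})$ times the weighted norm, which matches the factor in \eqref{small:sta}. For $T=\infty$, the moment conditions \eqref{hypo2} let us Taylor-expand the heat kernel to order $m$. Together with the $L^1$-weighted bound in \eqref{hypo}, this gives the extra decay $t^{-(n+1+m)/2}$.
\item \emph{Quadratic term.} The product $u\otimes u$ decays like $|x|^{-2(n+1+m)}$, far more than needed.
\item \emph{Force.} The force $f_m$ is compactly supported. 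Its size is controlled by the moments of $u\otimes u$. For $T=\infty$ these moments are $\lesssim t^{-(n+1+m)}\cdot t^{(n+|\alpha|)/2}$, which is better than $t^{-(n+3+m)/2}$ and gives \eqref{sizef}.
\end{enumerate}
Bilinear estimates of the same kind yield the contraction.

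\textbf{Step 4: the $o$ statement.} For the $o(|x|^{-(n+1+m)})$ claim we use one more control level, $f_{m+1}$. This kills the moments up to order $m$. The remainder then decays like $|x|^{-(n+2+m)}$, up to the finiteness of moments of order $m+1$, which holds only marginally. To avoid this, we split $R_{m+1}$ into near and far regions and use dominated convergence, together with $u_0=o(|x|^{-(n+1+m)})$ for the linear term.

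\textbf{Main obstacle.} The hardest part is the remainder estimate of Step 1, uniformly in $t$ for the global case. It requires splitting into the regions $|y|\le|x|/2$ and $|y|>|x|/2$, and handling the time integral near $s=t$, where the kernel is singular. We must show that the decay of $g$ only barely suffices, with no logarithmic loss. The $\sqrt T$ factor in \eqref{small:sta} comes from the time integration of the quadratic term.
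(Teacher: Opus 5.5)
Your architecture is the paper's. You run a contraction for $u\mapsto e^{t\Delta}u_0-\widetilde B_m(u,u)$ in the weighted $L^\infty$ spaces $X_{m,T}$ and $X_m$, with the control folded into the bilinear term so that $u\otimes u-f_m$ has vanishing moments of order $\le m-1$. The Duhamel term is then estimated by Taylor-expanding $F$ on $|y|\le|x|/2$, plus the split $s\le t/2$, $s\ge t/2$ for time decay.

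The genuine difference is the control profile. The paper's appendix builds a special $\chi$ (a product of one-dimensional bumps times polynomials, obtained by inverting a positive-definite Hankel matrix) with $\int x^\gamma\chi=\delta_{\gamma,0}$ for $|\gamma|\le m$. Then $\int x^\beta\chi_\alpha=\delta_{\alpha\beta}$, and the coefficients are exactly $A_\alpha(t)=\int y^\alpha(u\otimes u)(y,t)\dd y$. You instead take any $\chi\in C_0^\infty(K)$ with $\int\chi=1$. You observe, correctly, that $\int y^\beta\chi_\alpha=\binom{\beta}{\alpha}\int y^{\beta-\alpha}\chi$ vanishes unless $\alpha\le\beta$ componentwise and equals $1$ when $\alpha=\beta$. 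So the moment matrix is unipotent triangular, and $A_\alpha(t)$ is a fixed linear combination of the moments of $u\otimes u$ of order $\le m-1$. This removes the appendix and shows that any unit-mass bump in $K$ works, even one $\chi$ for all $m$. The constants depend on $\chi$ through the inverse matrix, which $\kappa(n,m,\chi)$ permits. What the paper's construction buys is the explicit identification of the control parameters with the moments themselves.

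Two points must be fixed.

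\textbf{Step~3(a).} The factor $1+T^{(m+n+1)/2}$ does \emph{not} match \eqref{small:sta}. Multiplied by the factor $\sqrt T+T$ of the quadratic term, the contraction would require $T^{(m+n+3)/2}\|u_0\|_{E_m}$ to be small, and \eqref{small:sta} does not give this for large $T$. You appear to have dropped the prefactor $t^{-n/2}$ of the heat kernel. For $|y|\le|x|/2$ one has $g_t(x-y)\lesssim t^{-n/2}(t/|x|^2)^{(n+1+m)/2}=t^{(m+1)/2}|x|^{-(n+1+m)}$. This gives the factor $1+T^{(m+1)/2}$, and then $(1+T^{(m+1)/2})(\sqrt T+T)\lesssim\sqrt T+T^{(m+3)/2}$, which is exactly \eqref{small:sta}.

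\textbf{Step~4.} Nothing is marginal for the nonlinear term. Since $u\otimes u=\OO(|y|^{-2(n+1+m)})$, its moments of order $m+1$ converge with room to spare. The order-$m$ Taylor remainder therefore gives directly $\widetilde B_{m+1}(u,u)=\OO(|x|^{-(n+2+m)})$ on $[0,T]$, and no dominated-convergence argument is needed. The logarithmic borderline concerns only $u_0$, whose moment of order $m+1$ diverges. That is why the linear term is treated with $g_t(x-y)\lesssim t^{(m+2)/2}|x|^{-(n+2+m)}$ on $|y|\le|x|/2$ and with the hypothesis $u_0=o(|x|^{-(n+1+m)})$ on $|y|\ge|x|/2$.

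A minor correction: in Step~3(c), the moments with $|\alpha|=m-1$ give exactly $t^{-(n+3+m)/2}$, not a better rate.
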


The last part of Conclusion~i) is mainly interesting for $m=0$, as it provides the existence of solutions with
the over-critical spatial decay rate $o(|x|^{-n-1})$  as $|x|\to+\infty$, under minimal assumptions on $u_0$, 
using a single matrix-valued compactly supported forcing term $f_0(x,t)=A_0(t)\chi(x)$.
\begin{remark}
The reason behind the slow decay properties (in general not faster than
$\OO(|x|^{-(n+1)})$ as $|x|\to+\infty$ and $\OO(t^{-(n+1)/2})$ as $t\to+\infty$, as mentioned in the introduction) of solutions to the Navier--Stokes equations \eqref{NS} is the slow decay of the kernel $F(\cdot,t)$ of the operator $e^{t\Delta}\P\mbox{div}$: from \cite{Miyakawa 2002 FE}, we have
\[
\begin{cases}
    |F(x,t)| \lesssim |x|^{-(n+1)} \\
    |F(x,t)| \lesssim t^{-(n+1)/2}
\end{cases}
\qquad\forall(x,t)\in(\R^n\backslash\{0\})\times(0,\infty),
\]
and these estimates are known to be optimal.
The key idea for overcoming these limiting decay rates is to construct a control force so as to eliminate all the terms decaying 
slower than $|x|^{-(n+1+m)}$ as $|x|\to+\infty$, for some $m\in\N$, in an appropriate asymptotic expansion of $F(\cdot,t)$. 
This is possible when the forcing term $f$ and the solutions are related
by a somewhat specific relation, which takes the form
\begin{equation}
    \label{controluu}
    f(x,t):=
    \sum_{\atopp{\alpha\in\N^n}{0\le|\alpha|\le m-1}} \chi_\alpha(x)
    \biggl(\int y^\alpha (u\otimes u)(y,t)\dd y\biggr),
\end{equation}
where the functions $(\chi_\alpha)_{\alpha\in\N^n}$
in $C^\infty_0(\R^n)$ satisfy the moment conditions 
\begin{equation}
\label{hyp:chi}
\int x^\beta\chi_\alpha(x)\dd x=\delta_{\alpha,\beta},
\qquad
\forall\alpha,\beta\in\N^n,\;0\le|\beta|\le m,
\end{equation}
where $\delta_{\alpha,\beta}$ is the Kronecker symbol. In this way, by choosing $m$ large, we will be able to construct solutions with arbitrarily fast decay.
Of course, an external force $\F=\divv f$ with $f$ given by \eqref{controluu} cannot be defined after the solution, because the solution itself depends on~$f$. For this reason, the force $f$ and the solution of~\eqref{NS} have to be constructed~\emph{at the same time}.

This leads us to define, for any $m\in\N$, the bilinear operator
\[
\widetilde
B_m(u,v)(t):=\int_0^t e^{(t-s)\Delta}\P\divv
\Big((u\otimes v) - \sum_{\atopp{\alpha\in\N^n}{0\le|\alpha|\le m-1}} \chi_\alpha\int y^\alpha (u\otimes v)(y,\cdot)\dd y\Big)(s)\dd s.
\]
The introduction of $\widetilde B_m$ seems to be a new idea.
When $m=0$, the above summation is vacuous and $\widetilde B_m$ boils down to 
the usual Navier--Stokes bilinear operator, given by
\[
\widetilde B_0(u,v)(t):=\int_0^te^{(t-s)\Delta}\P\divv(u\otimes v)(s)\dd s.
\]
When $m\ge1$, the bilinear operator $\widetilde B_m$
encodes the presence of a forcing term
acting on the flow with the needed structure, 
specified by~\eqref{controluu}.
The solution $u$ and the force~$f$ will be constructed simultaneously by a fixed point argument: \emph{a fortiori} both $f$ and $u$ will only depend on the initial datum $u_0$.
The advantage of this strategy is that the spatial profile of $f$ is fixed once and for all by the 
compactly supported function~$\chi=\chi(x)$, which is arbitrarily chosen
and independent on $u_0$. This means that,
for any time, it is possible to control the spatial decay as $|x|\to+\infty$ of the flow at any rate, by acting on the flow in a bounded region with a suitable (time-dependent) linear combination of an arbitrary control function.

The method is robust and could be applied to construct solutions in
a variety of functional settings, under assumptions on the data of different nature. The framework that we choose, consisting in using
weighted $L^\infty$-spaces, is probably the simplest one, as it readily provides strong pointwise estimates, and all the needed estimates are elementary.
With some more technicalities, one could also express localization conditions in other forms, for example, computing suitable weighted norms of the initial data and of the solution. 

In our global existence result, the smallness assumption~\eqref{hypo} is far from being optimal: one often prefers to express the smallness condition on $u_0$ using a scaling invariant norm (such as the $L^n(\R^n)$-norm, or the weaker Koch-Tataru's norm). 
This is indeed possible: condition~\eqref{hypo} could be replaced
by a scale-invariant smallness condition, together with a decay-moment condition of the form $\esup_{x\in \R^n}\,(1+|x|)^{n+1+m}|u_0(x)|<\infty$ and
$\int (1+|x|)^{m+1}|u_0(x)|\dd x<\infty$.
However, such an improvement on condition~\eqref{hypo} would be at the price of developing additional persistence decay results for solutions, making the proof longer and somewhat less transparent.
 
\end{remark}

\subsection{Preliminaries}

In order to quantify the decay properties stated in Theorem \ref{th:1}, we introduce the following function spaces.
Let $T>0$. We define $X_{m,T}$ to be the space of measurable 
vector fields in~$\R^n\times[0,T]$ such that
\[
\|u\|_{X_{m,T}}:=\esup_{x\in\R^n,\,t\in[0,T]}\,(1+|x|)^{n+1+m}|u(x,t)|<\infty,
\]
and the subspace $Y_{m,T}\subset X_{m,T}$ composed of those functions $u\in X_{m,T}$ satisfying
\[
\lim_{|x|\to+\infty}\esup_{t\in[0,T]}\,(1+|x|)^{n+1+m}|u(x,t)|=0.
\]
Similarly, we define $X_m$ to be the space of measurable vector fields in~$\R^n\times[0,\infty)$ such that
\[
\|u\|_{X_m}:=\esup_{x\in\R^n,\,t\ge0}\,(1+|x|)^{n+1+m}|u(x,t)| + \esup_{x\in\R^n,\,t\ge0}\,(1+t)^{(n+1+m)/2}|u(x,t)| <\infty.
\]
As for the initial datum, we define $E_m$ to be the space of measurable vector fields $a\in\R^n$ such that
\[
\|a\|_{E_m}:=\esup_{x\in\R^n}\,(1+|x|)^{n+1+m}|a(x)|<\infty,
\]
and the subspace $\E_m\subset E_m$ of those functions $a\in E_m$ such that 
\[
\lim_{|x|\to+\infty}(1+|x|)^{n+1+m}|a(x)|=0.
\]

We now recall some decay properties of the involved kernels.
The kernel $F=(F_{j;k,l})_{1\le j,k,l\le n}$ of the operator $e^{t\Delta}\P\divv$ satisfies,
for some constant $C>0$ independent on $(x,t)\in (\R^n\backslash\{0\})\times(0,+\infty)$, and for any $r_1,r_2\ge0$ such that $r_1+r_2=n+1$,
\begin{equation}
    \label{usuF0}
    |F(x,t)|\le C|x|^{-r_1}t^{-r_2/2}\qquad\forall(x,t)\in(\R^n\backslash\{0\})\times(0,\infty).
\end{equation}
Moreover, $F$ satisfies the scaling relation
\[
F(x,t)=t^{-(n+1)/2}\Phi\left(\frac{x}{\sqrt t}\right),
\qquad\forall(x,t)\in\R^n\times(0,\infty),
\]
where $\Phi:=F(\cdot,1)$ is a smooth function on~$\R^n$, satisfying, for any $\alpha\in\N^n$ and some constant $C_\alpha>0$,
the pointwise estimates
\[
\partial^\alpha\Phi(x)\le C_\alpha(1+|x|)^{-(n+1+|\alpha|)},
\qquad\forall x\in\R^n.
\]
See~\cite{Miyakawa 2002 FE}.
In particular, for any $\alpha\in\N^n$ and some $C_\alpha>0$,
\begin{equation}
\label{usuF}
\begin{cases}
    |\partial^\alpha F(x,t)| \le C_\alpha|x|^{-(n+1+|\alpha|)} \\
    |\partial^\alpha F(x,t)| \le C_\alpha t^{-(n+1+|\alpha|)/2}
\end{cases}
\qquad\forall(x,t)\in(\R^n\backslash\{0\})\times(0,\infty).
\end{equation}
We also deduce that $\Phi\in(L^1\cap L^\infty)(\R^n)$, and 
\begin{equation}
    \label{normF}
    \|F(\cdot,t)\|_1 = \|\Phi\|_1t^{-1/2},\qquad\forall t>0.
\end{equation}

The heat kernel is denoted $g_t(x):=(4\pi t)^{-n/2}\exp(-|x|^2/4t)$.
It satisfies, for any $\alpha\in\N^n$ and some $c_\alpha>0$,
\begin{equation}
\label{usugt}
\begin{cases}
    |\partial^\alpha g_t(x)| \le c_\alpha|x|^{-(n+|\alpha|)} \\
    |\partial^\alpha g_t(x)| \le c_\alpha t^{-(n+|\alpha|)/2}
\end{cases}
\qquad\forall(x,t)\in(\R^n\backslash\{0\})\times(0,\infty),
\end{equation}
In particular, for any $r_1,r_2\ge0$ such that $r_1+r_2=n$,
\begin{equation}
    \label{usugt0}
    g_t(x)\le C|x|^{-r_1}t^{-r_2/2}\qquad\forall(x,t)\in(\R^n\backslash\{0\})\times(0,\infty),
\end{equation}
for some constant $C>0$ depending only on the space dimension.

\subsection*{Notation} 

We denote by $e^{t\Delta}$ the heat semigroup. The space $L^2_\sigma(\R^n)$ denotes the restriction of $L^2(\R^n)$ to divergence-free vector fields, and the orthogonal projection from $L^2(\R^n)$ onto $L^2_\sigma(\R^n)$ is denoted by $\P$. 
For any $x\in\R^n$ and $\alpha\in\N^n$, we denote $x^\alpha:=x_1^{\alpha_1}\ldots x_n^{\alpha_n}$, $\partial^\alpha:=\partial_{x_1}^{\alpha_1}\ldots\partial_{x_n}^{\alpha_n}$ and $|\alpha|:=\alpha_1+\ldots+\alpha_n$. 
We denote $\int = \int_{\R^n} $.
For two vectors $a,b\in\R^n$, we denote by $a\otimes b$ the $(n\times n)$-matrix given by $(a\otimes b)_{i,j}=a_ib_j$. For an  $(n\times n)$-matrix $A=A(x)$, we define its divergence by $(\mathrm{div}\,A)_i:=\sum_j\partial_jA_{j,i}$.
The notation $A(t)\lesssim B(t)$ means that there exists a constant~$c>0$ independent of~$t$ such that $A(t)\le cB(t)$.

\section{Proof of Theorem \ref{th:1}}

Let $(\chi_\alpha)_{\alpha\in\N^n}$ be the family of smooth functions compactly supported in the box $K=\prod_{i=1}^n[a_i,b_i]\subset\R^n$ and satisfying the moment conditions~\eqref{hyp:chi}, constructed in Appendix \ref{construction}. We introduce, for any $m\in\N$, the matrix-valued function 
$R_m(u,v)=(R_m(u,v)_{k,l})_{1\le k,l\le n}$, defined by $R_0(u,v):=u\otimes v$ and
\[
R_m(u,v)(x,t)
:=(u\otimes v)(x,t)-\sum_{\atopp{\alpha\in\N^n}{0\le|\alpha|\le m-1}} \chi_\alpha(x)\int y^\alpha (u\otimes v)(y,t)\dd y\qquad(m\ge1).
\]
First of all, let us observe that, for any $u,v$ in $X_{m,T}$ or $X_m$, and any $0\le|\alpha|\le m$, the integral $\int y^\alpha(u\otimes v)(y,s)\dd y$ converges for almost every $s\in[0,T]$, so $R_m(u,v)$ and $R_{m+1}(u,v)$ are well defined.
Let us now define, for any $m\in\N$, the bilinear operator 
\begin{equation}
    \label{bilin:new}
    \widetilde B_m(u,v)(x,t)
    :=\int_0^t\!\!\!\int F(x-y,t-s):R_m(u,v)(y,s)\dd y\dd s, 
\end{equation}
where the column indicates a double summation over the subscripts $k,l=1,\ldots,n$: 
\[
\widetilde B_m(u,v)_j(x,t)
=\int_0^t\!\!\!\int \sum_{k,l=1}^n F_{j;k,l}(x-y,t-s) R_m(u,v)_{k,l}(y,s)\dd y\dd s\qquad(j=1,\ldots,n).
\]
In the subsequent estimates, all the components will be treated in the same way, so it will be more convenient to use the formulation \eqref{bilin:new}.

\medskip

Let us first prove the following estimates for the heat semigroup.

\begin{lemma} 
    \label{lem:heat}
    Let $T>0$. The operator $e^{t\Delta}$ maps continuously $E_m$ into $X_{m,T}$, and there exists a constant $C_0=C_0(n,m)>0$ such that, for any $a\in E_m$,
    \begin{equation}
        \label{est1:heat}
        \|e^{t\Delta}a\|_{X_{m,T}} \le C_0\big(1+T^{(m+1)/2}\big)\|a\|_{E_m}.
    \end{equation}
    Moreover, if $a\in\E_m$, then $e^{t\Delta}a\in Y_{m,T}$.
    
    If conditions \eqref{hypo}-\eqref{hypo2} hold true for some $\kappa>0$, with $a$ instead of $u_0$, then 
    \begin{equation}
        \label{est2:heat}
        \|e^{t\Delta}a\|_{X_m} \le C_0\left(\|a\|_{E_m} + \int |y|^{m+1}|a(y)|\dd y\right).
    \end{equation}
\end{lemma}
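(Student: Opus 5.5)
We split the estimate for $e^{t\Delta}a(x)=\int g_t(x-y)a(y)\dd y$ into the regions $|y|\ge|x|/2$ and $|y|<|x|/2$, treating $|x|\le1$ trivially via $\|e^{t\Delta}a\|_\infty\le\|a\|_\infty\le\|a\|_{E_m}$.

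\emph{Spatial bound on $[0,T]$.} Let $|x|\ge1$. On $|y|\ge|x|/2$ we have $|a(y)|\lesssim |x|^{-(n+1+m)}\|a\|_{E_m}$, and $\int g_t=1$, so this part is bounded by $C|x|^{-(n+1+m)}\|a\|_{E_m}$. On $|y|<|x|/2$ we have $|x-y|\ge|x|/2$, and we use Gaussian decay. Since $e^{-r^2/4t}\le C_k (t/r^2)^{k}$, we get
\[
g_t(x-y)\lesssim t^{-n/2}\,(t/|x|^2)^{(n+m+1)/2}=t^{(m+1)/2}|x|^{-(n+1+m)}.
\]
Also $\int|a|\lesssim\|a\|_{E_m}$, because $(1+|y|)^{-(n+1+m)}$ is integrable. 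Hence this part is bounded by $C\,T^{(m+1)/2}|x|^{-(n+1+m)}\|a\|_{E_m}$, which gives \eqref{est1:heat}.

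\emph{The class $\E_m$.} If $a\in\E_m$, the first piece is bounded by $\sup_{|y|\ge|x|/2}(1+|y|)^{n+1+m}|a(y)|\cdot C|x|^{-(n+1+m)}$, which is $o(|x|^{-(n+1+m)})$ uniformly in $t$. For the second piece we split again into $|y|<R$ and $R\le|y|<|x|/2$. The part $|y|<R$ gives $\int_{|y|<R}g_t(x-y)|a(y)|\dd y\lesssim e^{-|x|^2/(64T)}$-type bounds, hence is $o$ of the rate. The part $R\le|y|<|x|/2$ is at most $T^{(m+1)/2}|x|^{-(n+1+m)}\int_{|y|\ge R}|a|$, and this tail is small for large $R$. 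Together these give $e^{t\Delta}a\in Y_{m,T}$.

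\emph{Global estimate \eqref{est2:heat}.} This is the key step, and the moment conditions \eqref{hypo2} are what make it work. The spatial weight part with $T=\infty$ cannot come from the argument above, since the factor $t^{(m+1)/2}$ grows; the time decay also requires cancellation. We Taylor expand
\[
g_t(x-y)=\sum_{|\alpha|\le m}\frac{(-y)^\alpha}{\alpha!}\partial^\alpha g_t(x)+r_{m+1},
\qquad
|r_{m+1}|\lesssim|y|^{m+1}\sup_{\theta\in[0,1]}|\nabla^{m+1}g_t(x-\theta y)|.
\]
By \eqref{hypo2} the polynomial terms integrate to zero against $a$, so $e^{t\Delta}a(x)=\int r_{m+1}(x,y,t)a(y)\dd y$.

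For the time bound, \eqref{usugt} gives $|r_{m+1}|\lesssim|y|^{m+1}t^{-(n+m+1)/2}$, so $|e^{t\Delta}a|\lesssim t^{-(n+1+m)/2}\int|y|^{m+1}|a|$; for $t\le1$ we use instead $\|a\|_\infty$.

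For the space bound uniformly in $t$, we apply the Taylor remainder only on $|y|<|x|/2$, where $|x-\theta y|\ge|x|/2$ and \eqref{usugt} gives $|\nabla^{m+1}g_t|\lesssim|x|^{-(n+m+1)}$. On $|y|\ge|x|/2$ we do not expand: we bound the heat part directly by $C|x|^{-(n+1+m)}\|a\|_{E_m}$ as before. The subtracted polynomial terms restricted to this region are bounded by $\sum_{|\alpha|\le m}|\partial^\alpha g_t(x)|\int_{|y|\ge|x|/2}|y|^{|\alpha|}|a|\lesssim\sum_{|\alpha|\le m}|x|^{-(n+|\alpha|)}|x|^{-(m+1-|\alpha|)}\int|y|^{m+1}|a|$, using $|y|^{|\alpha|}\le|y|^{m+1}(|x|/2)^{|\alpha|-m-1}$ there.

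The main obstacle is this bookkeeping of the cut-off Taylor expansion, which is elementary. Summing the contributions yields \eqref{est2:heat}.
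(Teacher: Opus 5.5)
Your proposal is correct and follows essentially the same route as the paper: a split at $|y|=|x|/2$ with Gaussian decay for the estimate on $[0,T]$, then the cut-off Taylor expansion \eqref{decomp-etd-Y} together with the vanishing moments \eqref{hypo2} for the global space and time bounds. The deviations are minor and equally valid. For the $\E_m$ case you split further at $|y|=R$, whereas the paper extracts one extra factor $t^{1/2}|x|^{-1}$ from the Gaussian. You also bound the subtracted polynomial terms by $\int|y|^{m+1}|a|$ rather than by $\|a\|_{E_m}$.
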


\begin{proof}
For any $x\in\R^n$, 
\[
e^{t\Delta}a(x) = (g_t\ast a)(x) = \int g_t(x-y)a(y)\dd y,
\]
where $g_t(x)=(4\pi t)^{-n/2}\exp(-|x|^2/4t)$ is the heat kernel. Thus,
\[
|e^{t\Delta}a(x)|\le\|a\|_\infty\le\|a\|_{E_m}.
\]
Now and for the rest of the proof, let $|x|\ge1$. Using \eqref{usugt0}, we estimate 
\[
\begin{split}
|e^{t\Delta}a(x)|
&\le \sup_{|y|\le |x|/2}g_t(x-y) 
\int_{|y|\le |x|/2} |a(y)|\dd y
 + \sup_{|y|\ge|x|/2} |a(y)|\int_{|y|\ge|x|/2}g_t(x-y)\dd y \\
&\lesssim t^{(m+1)/2}|x|^{-(n+1+m)}\|a\|_{E_m} + (1+|x|)^{-(n+1+m)}\|a\|_{E_m} \\
&\lesssim |x|^{-(n+1+m)}\big(1+t^{(m+1)/2}\big)\|a\|_{E_m}.
\end{split}
\]
This implies \eqref{est1:heat} for some constant $C_0=C_0(n,m)>0$.

Suppose now that $a\in\E_m$. Using \eqref{usugt0}, we estimate 
\[
\begin{split}
|e^{t\Delta}a(x)|
&\le \sup_{|y|\le |x|/2}g_t(x-y) 
\int_{|y|\le |x|/2} |a(y)|\dd y
 + \esup_{|y|\ge|x|/2} |a(y)|\int_{|y|\ge|x|/2}g_t(x-y)\dd y \\
&\lesssim t^{(m+2)/2}|x|^{-(n+2+m)}\|a\|_{E_m} + \esup_{|y|\ge|x|/2} |a(y)|,
\end{split}
\]
so
\[
(1+|x|)^{n+1+m}|e^{t\Delta}a(x)| \lesssim t^{(m+2)/2}|x|^{-1}\|a\|_{E_m} + \esup_{|y|\ge|x|/2} (1+|y|)^{n+1+m}|a(y)|,
\]
and $e^{t\Delta}a\in Y_{m,T}$. 

Let us now prove \eqref{est2:heat}. 
The vanishing moment condition \eqref{hypo2} reads
\begin{equation}
    \label{hypo2bis}
    \int y^\alpha a(y)\dd y = 0\qquad\forall\alpha\in\N^n,\,0\le|\alpha|\le m,
\end{equation}
thus we can decompose
\begin{equation}
\label{decomp-etd-Y}
\begin{split}
e^{t\Delta}a(x)
=
&\int_{|y|\le |x|/2} \biggl[g_t(x-y)-\sum_{0\le |\alpha|\le m} \frac{(-1)^{|\alpha|}}{\alpha!}\partial^\alpha_x g_t(x)y^\alpha\biggr]a(y)\dd y
+
\int_{|y|\ge |x|/2} g_t(x-y)a(y)\dd y \\
&-
\sum_{0\le |\alpha|\le m} \frac{(-1)^{|\alpha|}}{\alpha!}\partial^\alpha_xg_t(x)\int_{|y|\ge |x|/2} y^\alpha a(y)\dd y. 
\end{split}
\end{equation}
We first focus on the spatial decay. Let us estimate the second integral. From the pointwise decay of $a$, we immediately obtain
\[
\int_{|y|\ge |x|/2} |g_t(x-y)| \, |a(y)| \dd y \lesssim  (1+|x|)^{-(n+1+m)}\|a\|_{E_m}.
\]
From Taylor's formula, applying condition \eqref{hypo} and the first decay property in \eqref{usugt}, the first integral is bounded by
\[
\sum_{|\alpha|=m+1}
\biggl(\int |y|^{|\alpha|}|a(y)|\dd y\biggr)
\esup_{|y|\ge |x|/2}|\partial_x^\alpha g_t(y)|
\lesssim (1+|x|)^{-(n+1+m)}\int |y|^{m+1}|a(y)|\dd y.
\]
Using \eqref{usugt} again, 
the third integral is bounded by  
\[
\sum_{0\le |\alpha|\le m}
|\partial^\alpha_xg_t(x)|\int_{|y|\ge |x|/2}|y|^{|\alpha|} |a(y)|\dd y
\lesssim (1+|x|)^{-(n+1+m)}\|a\|_{E_m}.
\]
Hence,
\[
(1+|x|)^{n+1+m} |e^{t\Delta}a(x)| \lesssim \|a\|_{E_m} + \int |y|^{m+1}|a(y)|\dd y.
\]
For proving the time decay, we can use again Taylor's formula and the vanishing moment condition \eqref{hypo2bis} to write
\[
e^{t\Delta}a(x)
=(m+1)\sum_{|\alpha|=m+1}\int_0^1(1-\theta)^m\int \frac{(-y)^\alpha}{\alpha!}\partial_x^\alpha g_t(x-\theta y)a(y)\dd y\dd\theta.
\]
Using now the second decay property in \eqref{usugt} as well as \eqref{hypo},
\[
|e^{t\Delta}a(x)|\lesssim t^{-(n+1+m)/2}\int |y|^{m+1}|a(y)|\dd y.
\]
Collecting the above estimates, and taking possibly $C_0$ larger, we finally obtain~\eqref{est2:heat}.
\end{proof}

We now prove the following estimates for the bilinear operator $\widetilde B_m$.

\begin{lemma}
    \label{lem:bilin}
    Let $T>0$. There exists a constant $C_1=C_1(n,m,\chi)>0$ such that the following properties hold true. The operator $\widetilde B_m$ maps continuously $X_{m,T}\times X_{m,T}$ into $X_{m,T}$ and $X_m\times X_m$ into $X_m$, and
    \begin{equation}
\label{est:bilin}
\begin{cases}
    \|\widetilde B_m(u,v)\|_{X_{m,T}} \le C_1(\sqrt{T}+T)\|u\|_{X_{m,T}}\|v\|_{X_{m,T}}, \\
    \|\widetilde B_m(u,v)\|_{X_m} \le C_1\|u\|_{X_m}\|v\|_{X_m}.
\end{cases}
\end{equation}
Moreover, $\widetilde B_{m+1}$ maps continuously $X_{m,T}\times X_{m,T}$ into $Y_{m,T}$, and 
\begin{equation}
\label{est2:bilin}
\|\widetilde B_{m+1}(u,v)\|_{X_{m,T}} \le C_1(\sqrt{T}+T)\|u\|_{X_{m,T}}\|v\|_{X_{m,T}}.
\end{equation}
\end{lemma}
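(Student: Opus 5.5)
The plan is to exploit the vanishing moments of $R_m(u,v)(\cdot,s)$ and to Taylor-expand the kernel $F$, as in the proof of Lemma~\ref{lem:heat}.

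By \eqref{hyp:chi}, for $0\le|\beta|\le m-1$ we have
\[
\int y^\beta R_m(u,v)(y,s)\dd y=\int y^\beta (u\otimes v)\dd y-\sum_{|\alpha|\le m-1}\delta_{\alpha,\beta}\int y^\alpha (u\otimes v)\dd y=0,
\]
and similarly $R_{m+1}$ has vanishing moments up to order $m$. I will also record the pointwise bound
\[
|R_m(u,v)(y,s)|\lesssim (1+|y|)^{-2(n+1+m)}\|u\|\|v\|+|\chi\text{-part}|.
\]
The $\chi$-part is supported in $K$ and bounded by $\sum_\alpha|\chi_\alpha(y)|\int|y|^{|\alpha|}|u||v|$. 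This integral converges because $|u||v|\lesssim(1+|y|)^{-2(n+1+m)}$, and in the global space it also carries the time factor $(1+s)^{-(n+1+m)/2}$ times something integrable. So the whole of $R_m$ is bounded by $\|u\|\|v\|(1+|y|)^{-2(n+1+m)}$, with constant depending on $\chi$. In $X_m$ we additionally get $\|R_m(s)\|_\infty\lesssim(1+s)^{-(n+1+m)}$ and $\int|y|^{k}|R_m(y,s)|\dd y\lesssim (1+s)^{-(n+1+m)/2+k/2}\|u\|\|v\|$ for $k\le m+\dots$. This uses $\|u(s)\|_\infty\le(1+s)^{-(n+1+m)/2}$ and interpolation between spatial and temporal weights: $|u|\le \|u\|_{X_m}\min((1+|y|)^{-(n+1+m)},(1+s)^{-(n+1+m)/2})$.

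For the spatial bound on $[0,T]$ with $|x|\ge1$, I split $\int F(x-y,t-s):R_m(y,s)\dd y$ into $|y|\ge|x|/2$ and $|y|\le|x|/2$. The outer region is controlled by $\|F(t-s)\|_1 (1+|x|)^{-2(n+1+m)}$; integrating $(t-s)^{-1/2}$ in time gives $\sqrt T$. For the inner region I subtract the Taylor polynomial of $F(x-\cdot,t-s)$ of order $m-1$ at $y=0$, which is allowed by the vanishing moments; this leaves the remainder tails $\partial^\alpha F(x,t-s)\int_{|y|\ge|x|/2}y^\alpha R_m$, bounded as in Lemma~\ref{lem:heat}. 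The Taylor remainder of order $m$ is controlled by \eqref{usuF} as $|x|^{-(n+1+m)}\int|y|^m|R_m|\dd y$, and integration in $s$ gives a factor $T$. Hence we obtain $C_1(\sqrt T+T)$. Small $|x|$ follows from $\|F(t-s)\|_1\lesssim(t-s)^{-1/2}$ and $\|R_m\|_\infty$. For $\widetilde B_{m+1}$, one more vanishing moment lets me expand to order $m$. The remainder then gives $|x|^{-(n+2+m)}\int|y|^{m+1}|R_{m+1}|$, and this integral is finite since $2(n+1+m)>n+m+2$. The outer tails decay like $|x|^{-2(n+1+m)+\dots}$, so they are $o(|x|^{-(n+1+m)})$, giving membership in $Y_{m,T}$ together with \eqref{est2:bilin}.

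For the global estimate in $X_m$, the spatial part uses the same decomposition. The time integral $\int_0^t$ must now converge uniformly, so I use $|\partial^\alpha F|\lesssim|x|^{-(n+1+|\alpha|)}$ in the remainder, together with $\int|y|^m|R_m(s)|\dd y\lesssim(1+s)^{-\gamma}$ with $\gamma>1$, available from the time weights. The main obstacle is the temporal decay $(1+t)^{-(n+1+m)/2}$. I split $\int_0^{t/2}+\int_{t/2}^t$. On $[t/2,t]$ I use $\|F(t-s)\|_1\|R_m(s)\|_\infty\lesssim(t-s)^{-1/2}(1+s)^{-(n+1+m)}$, which is more than enough. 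On $[0,t/2]$ I Taylor-expand in $y$ around $0$ at order $m$, using the vanishing moments up to $m-1$. With $|\partial^\alpha F(\cdot,t-s)|\lesssim t^{-(n+1+m)/2}$ for $|\alpha|=m$, this gives $t^{-(n+1+m)/2}\int_0^{t/2}\int|y|^m|R_m(y,s)|\dd y\dd s$. The difficulty is to show that $\int_0^\infty\int|y|^m|R_m|\dd y\dd s<\infty$. I would bound $|u||v|\le\|u\|\|v\|(1+|y|)^{-a}(1+s)^{-b}$ with $a/2+b... $ chosen, splitting the weights as $a=2(n+1+m)\theta$ and $b=(n+1+m)(1-\theta)$, and then choose $\theta$ so that $a>n+m$ and $b>1$. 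This is possible for $n\ge2$, since $(n+1+m)(1-\theta)>1$ and $2(n+1+m)\theta>n+m$ are compatible. The $\chi$-part is handled identically, since it involves the same moments.
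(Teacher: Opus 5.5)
Your proposal is correct and follows essentially the paper's proof. The ingredients match: vanishing moments of $R_m$ (resp.\ $R_{m+1}$); the split $|y|\lessgtr|x|/2$ with a Taylor expansion of $F$, giving the same three terms; the $\|F(t-s)\|_1$ bound for $|x|\le1$; and the $\int_0^{t/2}+\int_{t/2}^t$ splitting for the time decay, where your interpolation with $\theta=1/2$ is exactly the paper's bound \eqref{est:R}. One harmless slip: for $m\ge1$ the $\chi$-part of $R_m$ only satisfies $\|\cdot\|_{L^\infty}\lesssim(1+s)^{-(n+3+m)/2}$ (as in the computation leading to \eqref{sizef}), not $(1+s)^{-(n+1+m)}$, but $\sqrt{t}\,(1+t)^{-(n+3+m)/2}\lesssim t^{-(n+1+m)/2}$ still closes the estimate on $[t/2,t]$.
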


\begin{proof}
For $|x|\le1$, we estimate 
\[
|\widetilde B_m(u,v)(x,t)| \le\int_0^t \|F(t-s)\|_1\|R_m(u,v)(\cdot,s)\|_\infty \dd s,
\]
so that from \eqref{normF}, $|\widetilde B_m(u,v)(x,t)|\lesssim \sqrt{t}\|u\|_{X_{m,T}}\|v\|_{X_{m,T}}$ and $|\widetilde B_m(u,v)(x,t)|\lesssim\|u\|_{X_m}\|v\|_{X_m}$.

Let $|x|\ge1$. The moment conditions~\eqref{hyp:chi} on~$\chi_\alpha$ imply the important cancellation properties
\begin{equation}
    \label{moment-rm}
    \int y^\alpha R_m(u,v)(y,t)\dd y=0\qquad\forall t\ge0,\quad
    \forall\alpha\in\N^n,\;0\le|\alpha|\le m.
\end{equation}
This allows us to write
\[
\begin{split}
\widetilde
B_m(u,v)(x,t)
=
&\int_0^t\int_{|y|\le|x|/2} 
\biggl[F(x-y,t-s)-\sum_{0\le|\alpha|\le m-1}\frac{(-1)^{|\alpha|}}{\alpha!}
\partial^\alpha F(x,t-s)y^\alpha\biggr] : R_m(u,v)(y,s)\dd y\dd s \\
&+
\int_0^t\int_{|y|\ge |x|/2} F(x-y,t-s) : R_m(u,v)(y,s)\dd y\dd s \\
&-
\sum_{0\le|\alpha|\le m-1}\frac{(-1)^{|\alpha|}}{\alpha!}\int_0^t \partial^\alpha F(x,t-s)
\int_{|y|\ge|x|/2} y^\alpha R_m(u,v)(y,s)\dd y\dd s. 
\end{split}
\]
In the first integral, we use Taylor's formula to rewrite the term inside the brackets as
\[
\int_0^1 m(1-\theta)^{m-1}
\sum_{|\alpha|=m}
\frac{(-1)^{|\alpha|}}{\alpha!}\partial^\alpha F(x-\theta y,t-s)y^\alpha\dd \theta.
\]
From this and the first estimate in \eqref{usuF} for the kernel~$F$, we deduce that
\begin{equation}
\label{decomp-bilin}
\begin{split}
|\widetilde B_m(u,v)(x,t)|
\lesssim\; 
&|x|^{-(n+1+m)}\int_0^t\int_{|y|\le |x|/2}|y|^m |R_m(u,v)(y,s)|\dd y\dd s \\
&+
\int_0^t \|F(t-s)\|_1 \esup_{|y|\ge|x|/2}|R_m(u,v)(y,s)|\dd s \\
&+
\sum_{0\le|\alpha|\le m-1}|x|^{-(n+1+|\alpha|)}
\int_0^t\int_{|y|\ge |x|/2}|y|^{|\alpha|}|R_m(u,v)(y,s)|\dd y\dd s.
\end{split}
\end{equation}
Using now the decay \eqref{normF} and the pointwise estimate 
\[
|R_m(u,v)(y,s)| \le (1+|y|)^{-2(n+1+m)}\|u\|_{X_{m,T}}\|v\|_{X_{m,T}},
\]
we finally obtain
\[
\begin{split}
|\widetilde B_m(u,v)(x,t)|
&\lesssim\; 
\Big(t|x|^{-(n+1+m)} + \sqrt{t}(1+|x|)^{-2(n+1+m)} + t|x|^{-1-2(n+1+m)}\Big)\|u\|_{X_{m,T}}\|v\|_{X_{m,T}}  \\
&\lesssim |x|^{-(n+1+m)}\big(\sqrt{t}+t\big)\|u\|_{X_{m,T}}\|v\|_{X_{m,T}}.
\end{split}
\]
This proves the first estimate in \eqref{est:bilin}. 

If we consider the operator $\widetilde B_{m+1}$ instead, owing to the moment conditions \eqref{moment-rm}, we can use Taylor's formula at order $m$, and estimate the first integral on the right-hand side of \eqref{decomp-bilin} by $t|x|^{-(n+2+m)}$. This implies that $\widetilde B_{m+1}(u,v)\in Y_{m,T}$ for all $u,v\in X_{m,T}$, and proves~\eqref{est2:bilin}.

To prove the second estimate in \eqref{est:bilin}, we come back to \eqref{decomp-bilin} and use this time that 
\begin{equation}
\label{est:R}
|R_m(u,v)(y,s)|\le (1+|y|)^{-(n+1+m)}(1+s)^{-(n+1+m)/2}\|u\|_{X_m}\|v\|_{X_m},
\end{equation}
which implies
\[
\begin{split}
|\widetilde B_m(u,v)(x,t)|
&\lesssim 
|x|^{-(n+m+1)}\|u\|_{X_m}\|v\|_{X_m}.\\
\end{split}
\]
To obtain the time decay, we decompose 
\[
\begin{split}
\widetilde
B_m(u,v)(x,t)
=
&\int_0^{t/2}\!\!\!\int 
 \biggl[F(x-y,t-s)-\sum_{0\le|\alpha|\le m-1}\frac{(-1)^\alpha}{\alpha!}
 \partial^\alpha F(x,t-s)y^\alpha\biggr] : R_m(u,v)(y,s)\dd y\dd s \\
&+
\int_{t/2}^t\int F(x-y,t-s) : R_m(u,v)(y,s)\dd y\dd s. 
\end{split}
\]
Using Taylor's formula as before and the second estimate in~\eqref{usuF},
\[
\begin{split}
|\widetilde B_m(u,v)(x,t)|
&\lesssim t^{-(n+1+m)/2}\int_0^{t/2}\!\!\!\int|y|^{m}|R_m(u,v)(y,s)|\dd y\dd s \\
&+\esup_{\tau\ge t/2}\|R_m(u,v)(\cdot,\tau)\|_\infty\int_{t/2}^t \|F(t-s)\|_1\dd s \\
&\lesssim t^{-(n+1+m)/2}\|u\|_{X_m}\|v\|_{X_m}.
\end{split}
\]
We deduce the existence of a constant $C_1=C_1(n,m,\chi)$ such that
\begin{equation}
\label{bilti}
\|\widetilde B(u,v)\|_{X_m}\le C_1\|u\|_{X_m}\|v\|_{X_m}.
\end{equation}
This proves the second estimate in \eqref{est:bilin}.
\end{proof}

We now conclude the proof of Theorem \ref{th:1}. Let us introduce the mapping
\begin{equation}
    \label{psi}
    \Psi_m(u)(t) := e^{t\Delta}u_0 - \widetilde B_m(u,u)(t).
\end{equation}

\medskip

\noindent {\it Proof of assertion i) of Theorem \ref{th:1}.} We perform a fixed point argument. From the estimates of Lemmas \ref{lem:heat} and \ref{lem:bilin},
\[
\|\Psi_m(u)\|_{X_{m,T}} \le C_0\big(1+T^{(m+1)/2}\big)\|u_0\|_{E_m} + C_1(\sqrt{T}+T)\|u\|_{X_{m,T}}^2.
\]
Choosing $\kappa$ sufficiently small in \eqref{small:sta}, which is always possible also in the case of large initial data~$u_0$ (\emph{e.g.} by reducing $T>0$), we have 
\[
4C_0C_1\big(1+T^{(m+1)/2}\big)(\sqrt{T}+T)\|u_0\|_{E_m} < 1.
\]
Hence $\Psi_m$ is a contraction of the closed ball $\big\{u\in X_{m,T} : \|u\|_{X_{m,T}}\le2C_0\big(1+T^{(m+1)/2}\big)\|u_0\|_{E_m}\big\}$. Therefore, $\Psi_m$ admits a fixed point $u\in X_{m,T}$, which solves the equation 
\begin{equation}
    \label{NSm-int}
    u(t) = e^{t\Delta}u_0 - \widetilde B_m(u,u)(t).
\end{equation}
Now let us define
$A_\alpha(t):=\int y^\alpha (u\otimes u)(y,t)\dd y$ and, accordingly with~\eqref{control}, 
\begin{equation}
\label{choi:f}
f_m(x,t) :=
\sum_{\atopp{\alpha\in\N^n}{0\le|\alpha|\le m-1}} A_\alpha(t)\chi_\alpha(x).
\end{equation}
Then, the solution~$u$ to~\eqref{NSm-int} is indeed the solution of  
the Navier--Stokes equations~(NS), with external force $\F=\divv f_m$,
written in their integral form.
This proves the first part of assertion {\it i)} of Theorem \ref{th:1}.

Using now estimate \eqref{est2:bilin} and arguing as before, $\Psi_{m+1}$ admits a fixed point $u\in X_{m,T}$, which satisfies the integral formulation
\[
u(t) = e^{t\Delta}u_0 - \widetilde B_{m+1}(u,u)(t)
\]
of the Navier--Stokes equations \eqref{NS} with $\F=\divv f_{m+1}$. From Lemma \ref{lem:bilin}, we know that $\widetilde B_{m+1}(u,u)\in Y_{m,T}$. Assuming further that $u_0\in\E_m$, we get from Lemma \ref{lem:heat} that $e^{t\Delta}u_0\in Y_{m,T}$. Hence, $u\in Y_{m,T}$. This completes the proof of assertion {\it i)} of Theorem \ref{th:1}.

\medskip

\noindent {\it Proof of assertion ii) of Theorem \ref{th:1}.} Since $u_0$ satisfies conditions \eqref{hypo}-\eqref{hypo2}, Lemmas \ref{lem:heat} and \ref{lem:bilin} yield
\[
\|\Psi_m(u)\|_{X_m} \le C_0\left(\|u_0\|_{E_m} + \int |y|^{m+1}|u_0(y)|\dd y\right) + C_1\|u\|_{X_m}^2.
\]
Choosing $\kappa$ sufficiently small in \eqref{hypo} so that
\[
4C_0C_1\left(\|u_0\|_{E_m} + \int |y|^{m+1}|u_0(y)|\dd y\right) < 1,
\]
$\Psi_m$ admits a fixed point $u\in X_m$, which solves the integral formulation \eqref{NSm-int} of the Navier--Stokes equations \eqref{NS} with $\F=\divv f_m$ and $f_m$ given by \eqref{choi:f}.

To conclude the proof of Theorem \ref{th:1}, it remains to show the decay property \eqref{sizef} for $f_m$. We immediately have 
\begin{equation}
    \label{sizef0}
    \|f_m(t)\|_{L^\infty(\R^n)} \lesssim \sum_{\atopp{\alpha\in\N^n}{0\le|\alpha|\le m-1}} \int |y|^{|\alpha|} |u(y,t)|^2 \dd y.
\end{equation}
For any $\alpha\in\N^n$ with $0\le|\alpha|\le m-1$, let us split the integral as 
\[
\int |y|^{|\alpha|} |u(y,t)|^2 \dd y = \int_{|y|\le\sqrt t} |y|^{|\alpha|} |u(y,t)|^2 \dd y + \int_{|y|\ge\sqrt t} |y|^{|\alpha|} |u(y,t)|^2 \dd y.
\]
In the first integral, we have 
\[
\int_{|y|\le\sqrt t} |y|^{|\alpha|} |u(y,t)|^2 \dd y \le (1+t)^{-(n+1+m)} \|u\|_{X_m}^2 \int_{|y|\le\sqrt t} |y|^{|\alpha|} \dd y \lesssim (1+t)^{-(n+2+2m-|\alpha|)/2}.
\]
For the second integral, we estimate 
\[
\int_{|y|\ge\sqrt t} |y|^{|\alpha|} |u(y,t)|^2 \dd y \le \|u\|_{X_m}^2 \int_{|y|\ge\sqrt t} |y|^{|\alpha|} (1+|y|)^{-2(n+1+m)} \dd y \lesssim (1+t)^{-(n+2+2m-|\alpha|)/2}.
\]
From \eqref{sizef0}, we finally deduce that 
\[
\|f_m(t)\|_{L^\infty(\R^n)}\lesssim (1+t)^{-(n+3+m)/2},
\]
which is the desired decay rate \eqref{sizef} for the external force $f_m$. This completes the proof of Theorem \ref{th:1}. \hfill\qed

\appendix

\section{Construction of the spatial control profile}
\label{construction}

In this appendix, we provide a construction of the family of smooth compactly supported functions $(\chi_\alpha)_{\alpha\in\N^n}$ satisfying the moment relations \eqref{hyp:chi}, used in the proof of Theorem \ref{th:1}.

\medskip

Let $a,b\in\R$, with $a<b$. Let $\psi\in C_0^\infty(\R;\R)$ be a smooth function such that $\supp\psi\subset[a,b]$, $\psi\ge0$ and $\int \psi(s)\dd s=1$.
We want to show that there exists a polynomial 
$P(s):=\sum_{j=0}^m a_js^j$ of degree~$m$ such that the function $\phi(s):=P(s)\psi(s)$ satisfies the moment conditions
\begin{equation}
    \label{cond:phi}
    \int t^k \phi(s) \dd s
    =
    \begin{cases}
    1&\text{if $k=0$}, \\
    0&\text{if $k=1,\ldots,m$.}
    \end{cases}
\end{equation}
For any $\ell\ge0$, let us consider the moments 
\[
\mu_\ell := \int s^\ell \psi(s) \dd s,
\]
as well as the $(m+1)\times(m+1)$ matrix of moments $M=(\mu_{i+j})_{0\le i,j\le m}$.
In this way, conditions \eqref{cond:phi} rewrite as the linear system
\begin{equation}
    \label{linear}
    \sum_{j=0}^m M_{k,j}a_j
    =
    \begin{cases}
    1&\text{if $k=0$}, \\
    0&\text{if $k=1,\ldots,m$.}
    \end{cases}
\end{equation}
For any $a=(a_0,\ldots,a_m)\in\R^{m+1}$,
\[
a^\top Ma = \sum_{i,j=0}^m a_ia_j\mu_{i+j} = \int \biggl(\sum_{i=0}^m a_is^i\biggr)^2 \psi(s) \dd s.
\]
From the assumptions on $\psi$, the right-hand side is positive for every $a\neq0$. Therefore, $M$ is symmetric and positive definite, hence invertible. The linear system \eqref{linear} then admits a unique solution. 

Let us now consider two $n$-uplets $(a_i)_{1\le i\le n}$ and $(b_i)_{1\le i\le n}$ of real numbers such that $a_i<b_i$ for all $i=1,\ldots,n$. For each interval $[a_i,b_i]$, we consider a smooth function $\phi_i\ge0$ such that $\supp\phi_i\subset[a_i,b_i]$ and satisfying conditions \eqref{cond:phi}. We construct the box $K:=\prod_{i=1}^n [a_i,b_i]\subset\R^n$ and the function $\chi\in C_0^\infty(\R^n;\R)$ defined by
\[
\chi(x):=\prod_{i=1}^n\phi_{i}(x_i)\qquad\forall x=(x_1,\ldots,x_n)\in\R^n,
\]
which satisfies $\supp\chi\subset K$ and, for any $\gamma\in\N^n$, the moment conditions
\begin{equation}
    \label{cond:chi}
    \int x^\gamma \chi(x) \dd x
    =
    \begin{cases}
    1&\text{if $\gamma=0$}, \\
    0&\text{if $0<|\gamma|\le m$.}
    \end{cases}
\end{equation}
For $\alpha\in\N^n$, we then define
\[
\chi_\alpha(x) := \textstyle\frac{(-1)^{|\alpha|}}{\alpha!}\partial^\alpha\chi(x).
\]
An integration by parts yields, for any $\alpha,\beta\in\N^n$ with $0\le|\beta|\le m$,
\[
\int x^\beta\chi_\alpha(x)\dd x
=
\begin{cases}
\binom{\beta}{\alpha}
\int x^{\beta-\alpha}\chi(x)&\text{if $\alpha_i\le \beta_i$ for all $i=1,\ldots,n$}, \\
0&\text{otherwise.}
\end{cases}
\]
From the moment conditions \eqref{cond:chi}, we finally deduce that
\[
\int x^\beta\chi_\alpha(x)\dd x
=\delta_{\alpha,\beta}\qquad\forall\,0\le|\beta|\le m,
\]
and by construction, $\supp\chi_\alpha\subset K$ for any $\alpha\in\N^n$.

\small



\begin{thebibliography}{99}




\bibitem{BaeJin}
H.~O.~Bae and B.~J.~Jin, \emph{Temporal and spatial decays for the Navier--Stokes equations},
Proc. Roy. Soc. Edinburgh Sect. A {\bf 135} (2005), 461--478.

\bibitem{BraM02}
L.~Brandolese and Y.~Meyer, \emph{On the instantaneous spreading for the Navier--Stokes system in the whole space}, ESAIM Control Optim. Calc. Var. \textbf{8} (2002), 273--285.


\bibitem{Bra04i}
L.~Brandolese, \emph{Space-time decay of Navier--Stokes flows invariant under rotations}, Math. Ann. \textbf{329} (2004), no.~4, 685--706.

\bibitem{BraOka}
L.~Brandolese and T.~Okabe, \emph{Annihilation of slowly-decaying terms of Navier--Stokes flows by external forcing}, Nonlinearity \textbf{34} (2021), no.~3, 1733--1757.

\bibitem{BraV}
L.~Brandolese and F.~Vigneron, \emph{New asymptotic profiles of nonstationary solutions of the Navier--Stokes system}, J. Math. Pures Appl. (9) \textbf{88} (2007), no.~1, 64--86.




\bibitem{Cha25}
D.~Chamorro, \emph{Introduction aux équations de Navier--Stokes incompressibles}, CNRS editions, EDP science, Savoir actuels (2025),
ISBN-13 978-2759836345.

\bibitem{DS94}
Y.~Dobrokhotov and A.~I.~Shafarevich, \emph{Some integral identities and remarks on the decay at infinity of solutions of the Navier--Stokes equations}, Russian J. Math. Phys. \textbf{2} (1994), 133--135.

\bibitem{FujM}
Y.~Fujigaki and T.~Miyakawa, \emph{Asymptotic profiles of nonstationary incompressible Navier--Stokes flows in the whole space}, SIAM J. Math. Anal. \textbf{33} (2001), 523--544.

\bibitem{Fujita Kato}
H.~Fujita and T.~Kato, \emph{On the Navier--Stokes initial value problem. I}, Arch. Ration. Mech. Anal. \textbf{16} (1964), 269--315.

\bibitem{GW1}
T.~Gallay and C.~E.~Wayne, \emph{Invariant manifolds and the long-time asymptotics of the Navier--Stokes and vorticity equations on $\mathbf{R}^2$}, Arch. Ration. Mech. Anal. \textbf{163} (2002), no.~3, 209--258.

\bibitem{GW2}
T.~Gallay and C.~E.~Wayne, \emph{Long-time asymptotics of the Navier--Stokes and vorticity equations on ${\mathbb R}^3$}, R. Soc. Lond. Philos. Trans. Ser. A Math. Phys. Eng. Sci. \textbf{360} (2002), no.~1799, 2155--2188.

\bibitem{Kato}
T.~Kato, \emph{Strong $L^p$-solutions of the Navier--Stokes equation in ${\bf R}^m$, with applications to weak solutions}, Math. Z. \textbf{187} (1984), 471--480.

\bibitem{KukR} 
I.~Kukavica and E.~Reis, \emph{Asymptotic expansion for solutions of the Navier--Stokes equations with
potential forces}, J. Differential Equations, {\bf 250} (2011), no. 1, 607--622.

\bibitem{Lem}
P.~G.~Lemari\'e-Rieusset, \emph{Recent developments in the Navier--Stokes problem}, Chapman \& Hall/CRC Research Notes in Mathematics, vol.~431, Chapman \& Hall/CRC, Boca Raton, FL, 2002.

\bibitem{Lem16}
   {P.~G.~Lemari\'e-Rieusset},
   \emph{The Navier--Stokes problem in the 21st century},
   {CRC Press, Boca Raton, FL},
   {2016},
   {xxii+718},
   {ISBN 978-1-4665-6621-7}.

\bibitem{McOTop11}
R.~McOwen and P.~Topalov, \emph{Groups of asymptotic diffeomorphisms},
Discrete Contin. Dyn. Syst., {\bf 36} (2016), no. 11, 6331--6377.

\bibitem{McOTop23}
R.~McOwen and P.~Topalov, \emph{Spatial Asymptotic Expansions in the Navier--Stokes Equation},
Int. Math. Res. Not. IMRN {\bf 4} (2024), 3391--3441.

\bibitem{Miyakawa 2002 FE}
T.~Miyakawa, \emph{Notes on space-time decay properties of nonstationary incompressible Navier--Stokes flows in $\mathbf{R}^n$}, Funkcial. Ekvac. \textbf{45} (2002), 271--289.


\bibitem{MS01}
T.~Miyakawa and M.~E.~Schonbek, \emph{On optimal decay rates for weak solutions to the Navier--Stokes equations in $\mathbb R^n$}, in Proceedings of Partial Differential Equations and Applications (Olomouc, 1999), Math. Bohem. \textbf{126} (2001), 443--455.



\bibitem{Sch91}
M.~E.~Schonbek, \emph{Lower bounds of rates of decay for solutions to the Navier--Stokes equations}, J. Amer. Math. Soc. \textbf{4} (1991), no.~3, 423--449.

\bibitem{Top25}
P.~Topalov, \emph{Spatial Decay/Asymptotics in the Navier--Stokes Equation},
Russian J. Math. Phys. \textbf{32} (2025) no.~1, 196--209.




\end{thebibliography}
\end{document}